\begin{document}

\newtheorem{theorem}{Theorem}[section]
\newtheorem{thm}[theorem]{Theorem}
\newtheorem{prop}[theorem]{Proposition}
\newtheorem{lemma}[theorem]{Lemma}

\theoremstyle{definition}
\newtheorem{definition}[theorem]{Definition}
\newtheorem{remark}[theorem]{Remark}

\newcommand{\C}{{\mathbb C}}
\newcommand\BB{{\mathcal B}}
\newcommand\FF{{\mathcal F}}
\newcommand\VV{{\mathcal V}}
\newcommand\WW{{\mathcal W}}
\newcommand\Z{{\mathbb Z}}
\newcommand{\bbar}{\overline}

\title[An almost complex Castelnuovo de Franchis theorem]{An almost complex
Castelnuovo de Franchis theorem}

\author{Indranil Biswas}

\address{School of Mathematics, Tata Institute of Fundamental Research, Homi 
Bhabha Road, Mumbai 400005, India}

\email{indranil@math.tifr.res.in}

\author{Mahan Mj}

\address{School of Mathematics, Tata Institute of Fundamental Research, Homi 
Bhabha Road, Mumbai 400005, India}

\email{mahan@math.tifr.res.in}

\subjclass[2000]{32Q60, 14F45}

\keywords{Almost complex structure, Castelnuovo--de Franchis theorem, Riemann
surface, fundamental group}

\date{}

\begin{abstract}
Given a compact almost complex manifold, we prove
a Castelnuovo--de Franchis type theorem for it.
\end{abstract}

\maketitle

\section{Introduction}

Given a smooth complex projective variety $X$, the classical Castelnuovo-de 
Franchis theorem \cite{cas, df} associates to an isotropic subspace of $H^0(X,\, 
\Omega^1_X)$ of dimension greater than one an irrational pencil on $X$. The 
topological nature of this theorem was brought out by Catanese \cite{cat}, who 
established a bijective correspondence between
subspaces $\widetilde{U} \,\subset\, H^1(X, \C)$
of the form $U \oplus \bbar{U}$ with $U$ being a maximal isotropic subspace of $H^1(X,
\C)$ of dimension $g\, \geq\, 1$ and irrational fibrations on $X$ of genus $g$.
The purpose of this note is to emphasize this topological content further. We 
extract topological hypotheses that allow the theorem to go through when we have 
only an almost complex structure.

In what follows, $M$ will be a compact smooth manifold of dimension $2k$. Let
$$
J_M\, :\, TM\,\longrightarrow\, TM
$$
be an almost complex structure on $M$, meaning $J_M\circ J_M\,=\, -\text{Id}_{TM}$.

\begin{definition}
We shall say that a collection of closed complex $1$--forms $\omega_1, \cdots,
\omega_n$ on $M$ are in {\it general position} if 
\begin{enumerate}
\item the zero-sets 
$$
Z(\omega_i)\,:=\, \{x\, \in\, M\, \mid\, \omega_i(x)\,=\, 0\}\, \subset\, M
$$
are smooth embedded submanifolds, and
\item these submanifolds $Z(\omega_i)$ intersect transversally.
\end{enumerate}
\end{definition}

We are now in a position to state the main theorem of this note.

\begin{theorem} \label{main}
Let $M$ be a compact smooth $2k$--manifold equipped with an almost complex structure
$J_M$. Let $\omega_1\, , \cdots\, , \omega_g$
be closed complex $1$--forms on $M$ linearly independent over $\C$, with $g\, \geq\, 2$,
such that
\begin{itemize}
\item each $\omega_i$ is of type $(1\, ,0)$,
meaning $\omega_i(J_M (v))\,=\, \sqrt{-1}\cdot\omega_i(v)$ for all $v\,\in\, TM$,
and

\item $\omega_i$ are in general position with $\omega_i \wedge \omega_j \,=\, 0$ for all
$i\, , j$.
\end{itemize}
Then there exists a smooth almost holomorphic map 
$f \,:\, M\,\longrightarrow\, C$ to a compact Riemann surface of genus at least $g$, and
there are linearly independent holomorphic $1$--forms 
$\eta_1\, , \cdots\, , \eta_g$ on $C$, such that $\omega_i \,=\, f^\ast \eta_i$
for all $i$.
\end{theorem}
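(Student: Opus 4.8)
The plan is to use the relation $\omega_i \wedge \omega_j \,=\, 0$ to manufacture a codimension-one holomorphic foliation of $M$ whose leaf space is the desired curve $C$. The first step is pointwise linear algebra. Since each $\omega_i$ is of type $(1\, ,0)$, it is a $\C$-linear functional on $(T_x M,\, J_M)$, and the vanishing of the $(2\, ,0)$-form $\omega_i \wedge \omega_j$ forces the covectors $\omega_1(x)\, ,\cdots\, ,\omega_g(x)$ to span a subspace of the $(1\, ,0)$-part of $T^\ast_x M$ of complex dimension at most one. Writing $Z \,:=\, \bigcap_i Z(\omega_i)$ for the common zero locus, on $M \setminus Z$ the common kernel $K_x \,:=\, \bigcap_i \ker \omega_i(x)$ is therefore a $J_M$-invariant subspace of complex codimension one, and these assemble into a smooth distribution $K \,\subset\, TM$ over $M \setminus Z$.

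Next I would establish integrability. Near a point where some $\omega_{i_0} \,\neq\, 0$ one has $\omega_i \,=\, h_i\, \omega_{i_0}$ for smooth functions $h_i$, so that $K \,=\, \ker \omega_{i_0} \,=\, \ker(\mathrm{Re}\,\omega_{i_0}) \cap \ker(\mathrm{Im}\,\omega_{i_0})$. Because $\omega_{i_0}$ is closed, its real and imaginary parts are separately closed, so by the Frobenius theorem $K$ is integrable and defines a real-codimension-two (hence $J_M$-invariant) foliation $\mathcal{F}$ on $M \setminus Z$. Closedness also gives, locally, a primitive $\omega_{i_0} \,=\, df$ whose differential is of type $(1\, ,0)$, i.e. a pseudoholomorphic first integral; the ratios $h_i \,=\, \omega_i/\omega_{i_0}$ are leafwise constant, and the local map built from primitives $(f_1\, ,\cdots\, ,f_g)$ has real rank two, so its image is locally a pseudoholomorphic curve and its fibres are the leaves of $\mathcal{F}$.

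The core construction is then to pass to the leaf space. I would show that the leaves of $\mathcal{F}$, suitably completed across $Z$, form a Hausdorff one-dimensional complex manifold $C$, with quotient map $f \,:\, M \,\longrightarrow\, C$ that is smooth and almost holomorphic (i.e. $df \circ J_M \,=\, J_C \circ df$) by construction; compactness of $M$ then forces $C$ to be a compact Riemann surface. The \emph{general position} hypothesis on the zero sets $Z(\omega_i)$ is precisely what controls the behavior of $\mathcal{F}$ near $Z$ and guarantees that the quotient is a manifold rather than a singular or non-Hausdorff space, playing the role that resolving base points of a pencil plays in the classical setting. Since the $\omega_i$ annihilate $K$ and are basic for $\mathcal{F}$, each descends to a $1$-form $\eta_i$ on $C$ with $\omega_i \,=\, f^\ast \eta_i$, and on a Riemann surface, where the almost complex structure is automatically integrable, the $\eta_i$ are genuinely holomorphic. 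The genus bound is then immediate: if $\sum_i c_i \eta_i \,=\, 0$ then $\sum_i c_i \omega_i \,=\, f^\ast(\sum_i c_i \eta_i) \,=\, 0$, so by linear independence of the $\omega_i$ all $c_i$ vanish; hence the $\eta_i$ are linearly independent in $H^0(C,\, \Omega^1_C)$, whose dimension is the genus, giving $g(C) \,\geq\, g$.

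I expect the main obstacle to lie in the third step: proving that the leaf space is a genuine compact Riemann surface and that $f$ extends smoothly and properly across the common zero locus $Z$. Without integrability of $J_M$ one cannot invoke the Newlander--Nirenberg theorem or complex-analytic normal forms, so the structure of the leaves near $Z$ — governed by the general-position hypothesis — must be analyzed directly, combining closedness and type $(1\, ,0)$ of the $\omega_i$ to pin down the local model of $f$ along its critical set.
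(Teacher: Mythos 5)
Your first two steps coincide with the paper's own opening moves (Lemma \ref{cod2}): the common kernel $\mathcal F$ is a $J_M$--invariant integrable distribution of real codimension two on $V = M\setminus \mathcal Z$, the ratios $f_{i,j}$ of \eqref{e1} are leafwise constant, and \emph{granted} a Riemann surface quotient, the descent of the $\omega_i$ and the genus bound go through exactly as you write. But the two claims you defer with ``I would show'' are the entire mathematical content of the theorem, and your proposal supplies a mechanism for neither, so there is a genuine gap. First, Hausdorffness of the leaf space is not what the general position hypothesis delivers: that hypothesis only constrains the zero locus $\mathcal Z$, whereas non-Hausdorffness of a leaf space is a global phenomenon on $V$. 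Indeed every hypothesis except $g\geq 2$ is satisfied by a single nowhere-zero closed $(1,0)$--form on a complex torus, whose foliation can have dense leaves; so closedness plus type $(1,0)$ cannot suffice, and some further argument is unavoidable. The paper proves that the leaves are proper embedded submanifolds of $V$ by a period argument: a leaf accumulating at a point $x$ would produce, for every $\epsilon>0$, a loop consisting of a leafwise path $\tau_{12}$ (along which every $\int\omega_i$ vanishes) and a short transversal arc $\sigma_{12}$ with $\vert\int_{\sigma_{12}}\omega_i\vert<\epsilon$; taking $\epsilon$ below the absolute value of every nonzero period forces $\int_{\sigma_{12}}\omega_i=0$ for all $i$, and in the limit $\omega_i(\sigma'(0))=0$, contradicting transversality of $\sigma$ to $\mathcal F$. (Remark \ref{referee} gives the referee's alternative: since $d f_{i,j}\wedge\omega_j=0$, the ratio map $V\to \C P^1$ is constant on leaves --- an argument that genuinely uses the presence of at least two forms.) Nothing in your proposal plays this role.

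Second, the extension across $\mathcal Z$: you say the local model near the common zero locus ``must be analyzed directly,'' but no analysis is offered, and this is precisely where the paper's one new device enters --- the almost complex blow-up. General position is used \emph{here}, not for Hausdorffness: it guarantees that $\mathcal Z$ is an almost complex submanifold of complex codimension at least two, so $\mathcal Z$ can be replaced by the projectivization of its $J_M$--complex normal bundle, yielding $\widehat M$ and a smooth extension $\widehat q\colon \widehat M\to D$ of the leaf-space projection. Note that the paper never ``completes the leaf space across $Z$'' as you propose; it keeps $D$ as the leaf space of $V$ and instead extends the \emph{map}. Compactness of $D$ is then read off from compactness of $\widehat M$; your shortcut ``compactness of $M$ forces $C$ to be compact'' presupposes the very extension being constructed, since $q$ on $V$ alone is not proper. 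Finally, because $D$ carries the $g\geq 2$ linearly independent holomorphic forms $\eta_i$, its genus exceeds one, so every holomorphic map $\C P^1\to D$ is constant; hence the exceptional $\C P^1$--fibres of $\widehat M\to M$ collapse and $\widehat q$ descends to the desired smooth almost holomorphic map $\overline q\colon M\to D$. Neither the blow-up, nor this collapsing argument, nor any substitute (say, a removable-singularity theorem for almost holomorphic maps into hyperbolic Riemann surfaces) appears in your proposal, so as it stands the proof is an accurate outline of the theorem's difficulties rather than a proof.
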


\section{Leaf space and almost complex blow-up}\label{se2}

\subsection{Leaf space}

Assume that $\omega_i$ are forms as in Theorem \ref{main}. Since $\omega_i \wedge 
\omega_j \,=\, 0$ for all $1\,\leq\, i\, , j\,\leq\, g$, it follows that there are 
complex valued smooth functions $f_{i,j}$ such that
\begin{equation}\label{e1}
\omega_i \,=\, f_{i,j} \omega_j
\end{equation} 
wherever $\omega_j \,\neq\, 0$. Hence the collection
$$\WW\,=\,\{ \omega_1, \cdots, \omega_g \}$$
determines a complex line subbundle of the complexified cotangent bundle
$(T^*M)\otimes{\mathbb C}$ over the open subset
$$
V\,:=\, M \setminus \bigcap_{i=1}^g Z(\omega_i)\,\subset\, M\, .
$$ 

\begin{lemma}\label{cod2}
Let $\FF\,:=\, \{v\, \in\, TV\,\mid\, \omega_i(v)\,=\, 0\,~~
\forall ~~ 1\,\leq\, i\, \leq\, g\}\,\subset\, TV$ be the
distribution on $V$ defined by $\WW$. Then $\FF$ is integrable and 
defines a foliation of real codimension two on $V$.
\end{lemma}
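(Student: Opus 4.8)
The plan is to treat both integrability and the codimension count as local questions, since they are checked on an open cover of $V$. Fix an index $j$ and set $U_j \,:=\, V \cap \{x \,\mid\, \omega_j(x)\,\neq\, 0\}$; by the definition of $V$ these sets cover $V$. On $U_j$, equation \eqref{e1} gives $\omega_i \,=\, f_{i,j}\,\omega_j$ for every $i$, whence $\bigcap_{i}\ker\omega_i \,=\, \ker\omega_j$ on $U_j$. Thus $\FF$ restricted to $U_j$ is simply the kernel of the single nowhere-vanishing \emph{closed} complex $1$-form $\omega_j$, which is the object I will analyze.

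The first substantive step is to pin down the codimension. I would write $\omega_j \,=\, \alpha + \sqrt{-1}\,\beta$ with $\alpha,\beta$ real $1$-forms; then $\ker\omega_j \,=\, \ker\alpha\cap\ker\beta$, so it suffices to check that $\alpha$ and $\beta$ are pointwise $\mathbb R$-linearly independent on $U_j$. This is exactly where the type $(1\,,0)$ hypothesis enters: comparing real and imaginary parts in the identity $\omega_j(J_M v)\,=\,\sqrt{-1}\,\omega_j(v)$ yields $\beta \,=\, -\alpha\circ J_M$, and any $\mathbb R$-linear dependence between $\alpha$ and $\beta$ at a point, combined with $J_M^2 \,=\, -\mathrm{Id}$, would force $\alpha \,=\, \beta \,=\, 0$ there, contradicting $\omega_j \,\neq\, 0$ on $U_j$. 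Hence $\alpha,\beta$ are linearly independent throughout $U_j$, and $\FF|_{U_j} \,=\, \ker\alpha\cap\ker\beta$ is a smooth subbundle of $TU_j$ of real rank $2k-2$, i.e. of real codimension two.

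For integrability I would invoke the dual (Pfaffian) form of the Frobenius theorem: $\ker\alpha\cap\ker\beta$ is involutive if and only if the ideal generated by $\alpha$ and $\beta$ is a differential ideal, that is, $d\alpha$ and $d\beta$ lie in that ideal. Since $\omega_j$ is closed, its real and imaginary parts are closed, so $d\alpha \,=\, 0 \,=\, d\beta$ and the differential-ideal condition holds trivially. Therefore $\FF|_{U_j}$ is integrable. Because both the constant-rank property and involutivity are local conditions and the $U_j$ cover $V$, the distribution $\FF$ is integrable on all of $V$ and defines a foliation of real codimension two, as claimed.

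I expect the only genuine subtlety to be the codimension computation. Integrability comes almost for free once one notices that the local generators $\omega_j$ may be taken to be the given closed forms, whereas ruling out the degenerate possibility that the real and imaginary parts of $\omega_j$ are proportional — which would drop the codimension to one — really does require the almost complex structure through the $(1\,,0)$ condition. The transition between different charts $U_j$ needs no extra argument, since everything is verified pointwise.
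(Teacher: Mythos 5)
Your proof is correct and takes essentially the same approach as the paper's: the condition $\omega_i\wedge\omega_j\,=\,0$ (via equation \eqref{e1}) reduces $\FF$ locally to the kernel of a single nonvanishing form $\omega_j$, the type $(1\,,0)$ hypothesis forces that kernel to have real codimension two (your independence of $\operatorname{Re}\omega_j$ and $\operatorname{Im}\omega_j$ is exactly the paper's surjectivity of the $\mathbb R$--linear map $\omega^x_j\,:\,T_xM\,\longrightarrow\,\C$), and closedness of the $\omega_i$ gives integrability by Frobenius. Your explicit Pfaffian check $d\alpha\,=\,d\beta\,=\,0$ merely spells out the step the paper states tersely as ``since $d\omega_i\,=\,0$, the distribution is integrable.''
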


\begin{proof}
For any $x\, \in\, M$ and $1\,\leq\, i\, \leq\, g$, consider the $\mathbb R$--linear
homomorphism
$$
\omega^x_i\, :\, T_xM \, \longrightarrow\,
{\mathbb C}\, ,~~~\, ~~ v\, \longmapsto\, \omega_i(x)(v)\, .
$$
Since $\omega_i$ is of type $(1\, ,0)$, if $\omega_i(x)\, \not=\, 0$, then
$\omega^x_i$ is surjective. Also, $\omega^x_j$ is a scalar multiple of
$\omega^x_i$ because $\omega_i\wedge \omega_j\,=\, 0$. Therefore, we conclude that
the distribution $\FF$ on $V$ is of real codimension two.

Since $dw_i\,=\, 0$ for all $i$, it follows that the distribution $\FF$ is
integrable.
\end{proof}

\begin{remark}\label{even}
Since $Z(\omega_i)$ are smooth embedded submanifolds by hypothesis,
and $\omega_i$ are of type $(1\, ,0)$, it follows 
that $Z(\omega_i)$ are almost complex submanifolds of $M$, meaning $J_M$ preserves the 
tangent subbundle $TZ(\omega_i)\, \subset\, (TM)\vert_{Z(\omega_i)}$.
Consequently, all the intersections of the $Z(\omega_i)$'s are also almost complex
submanifolds and are therefore even dimensional.

Clearly, each $Z(\omega_i)$ has complex codimension at least one (real codimension at 
least two) as an almost complex submanifold. Therefore, the complement
$M \setminus V$ has complex codimension at least two.
\end{remark}

\subsection{Almost complex blow-up}

We shall need an appropriate notion of blow-up in our context. Suppose 
$K\,\subset\, M$ is a smooth embedded submanifold of $M$ of dimension $2j$ such 
that $J_M (TK) \,=\, TK$. By Remark \ref{even}, the submanifold $\bigcap_{i=1}^g 
Z(\omega_i)$ satisfies these conditions. Note that
$J_M$ induces an automorphism
$$
J_{M/K}\, :\, ((TM)\vert_K)/TK\,\longrightarrow\, ((TM)\vert_K)/TK
$$
of the quotient bundle over $K$. Since $J_M$ is an almost complex structure it
follows that $J_{M/K}\circ J_{M/K}\,=\, - \text{Id}_{((TM)\vert_K)/TK}$. 
Therefore, $((TM)\vert_K)/TK$ is a complex vector bundle on $K$ of rank
$k-j$. We would like to replace $K$ by the (complex) projectivized normal bundle
${\mathbb P}(((TM)\vert_K)/TK)$ which will be called the \textbf{almost
complex blow-up} of $M$ along $K$.

For notational convenience, the intersection $\bigcap_{i=1}^g Z(\omega_i)$ will be 
denoted by $\mathcal Z$.

We first projectivize $(TM)\vert_{\mathcal Z}$ to get a 
${\mathbb C}{\mathbb P}^{k-1}$ bundle over $\mathcal Z$; this $\C{\mathbb P}^{k-1}$ 
bundle will be denoted by $\BB$. So $\BB$ parametrizes the space of all (real) two 
dimensional subspaces of $(TM)\vert_{\mathcal Z}$ preserved by $J_M$ (such two 
dimensional subspaces are precisely the complex lines in $(TM)\vert_{\mathcal Z}$ 
equipped with the complex vector bundle structure defined by $J_M$). Let
$$\pi\,:\, \BB\,\longrightarrow\, \mathcal Z$$
be the natural projection.

For notational convenience, the pulled back vector bundle $\pi^\ast 
((TM)\vert_{\mathcal Z})$ will be denoted by $\pi^\ast TM$.

Let
$$
T_\pi\, :=\, {\rm kernel}(d\pi)\, \subset\, T\BB
$$
be the relative tangent bundle, where $d\pi\, :\, T\BB\, \longrightarrow\,
\pi^\ast T{\mathcal Z}$ is the differential of $\pi$. The map $\pi$ is almost
holomorphic, and $T_\pi$ has the structure of a complex vector bundle.
It is known that the complex vector bundle $T_\pi$ is identified
with the vector bundle $Hom_{\mathbb C}({\mathcal L},\, (\pi^\ast TM)/{\mathcal L})\,=\,
((\pi^\ast TM)/{\mathcal L})\otimes_{\mathbb C} {\mathcal L}^*$, where
$$
{\mathcal L}\, \subset\, \pi^\ast TM
$$
is the tautological real vector bundle of rank two; note that both ${\mathcal L}$ 
and $(\pi^\ast TM)/{\mathcal L}$ have structures of complex vector bundles given by 
$J_M$, and the above tensor product and homomorphisms are both over $\mathbb C$. 
Therefore, the pullback $\pi^\ast TM$ splits as $${\mathcal L}\oplus ((\pi^\ast 
TM)/{\mathcal L}) \,=\, {\mathcal L}\oplus (T_{\pi}\otimes {\mathcal L})\, .
$$
The image of the zero section of the complex line bundle ${\mathcal L}
\,\longrightarrow\, \BB$ is identified with 
$\BB$, and the normal bundle of $\BB\, \subset\, {\mathcal L}$ is identified
with ${\mathcal L}$. A small deleted normal neighborhood $U_{\BB}$ of $\BB$ in
${\mathcal L}$ 
can be identified with a deleted neighborhood $U$ of $\mathcal
Z$ in $M$. Let $\overline{U}_{\BB}\,:=\, U_{\BB}\bigcup \BB$ be the
neighborhood of $\BB$ in ${\mathcal L}$ (it is no longer a deleted neighborhood),
where $\BB$ is again identified with the image of the zero section of $\mathcal L$.
In the disjoint union $(M\setminus{\mathcal Z})\sqcup 
\overline{U}_{\BB}$, we may identify $U$ with $U_{\BB}$. The resulting topological 
space will be called the {\bf almost complex blow-up} of $M$ along 
${\mathcal Z}$.

\begin{remark}\mbox{}
\begin{enumerate}
\item Consider the foliation on the deleted neighborhood $U$ of $\mathcal Z$ in $M$
defined by $\mathcal F$. Let ${\mathbb L}_U$ denote the leaf space for it.
After identifying $\BB$ with the image of the zero section of $\pi^\ast TM$, we obtain 
locally, from a neighborhood of $\BB$, a map to the leaf space ${\mathbb L}_U$.

\item Note that the notion of an almost complex blow-up above is 
well-defined up to the choice of an identification of $U$ with $U_{\BB}$. Therefore,
the construction yields a well-defined almost complex manifold, up to an isomorphism.
\end{enumerate}
\end{remark}

\section{Proof of Theorem \ref{main}}

We will first show that the leaves of $\FF$ in Section \ref{se2} are proper embedded
submanifolds of $V\,=\, M\setminus {\mathcal Z}$.

Suppose some leaf ${\mathbb L}_0$ does not satisfy the above property. Then there 
exists $x \,\in \,V$ and a neighborhood $U_x$ of $x$ such that $U_x\bigcap{\mathbb 
L}_0$ contains infinitely many leaves (of $\FF$) accumulating at $x$. Let $\sigma$ 
be a smooth path starting at $x$ transverse to $\FF$. Thus for every $\epsilon\,> 
\,0$ there exist
\begin{enumerate}
\item 
distinct (local) leaves $F_1\, , F_2\,\subset\, U_x \bigcap \FF$, such that
(globally) $F_1, F_2 \,\subset\, {\mathbb L}_0$,
\item points $y_j \,\in\, F_j$, $j\,=\, 1\, ,2$, 
\item a sub-path $\sigma_{12}\,\subset\,\sigma
\,\subset\, U_x$ joining $y_1$ and $y_2$, and 
\item a path $\tau_{12}\,\subset\, L_0$ joining $y_1$ and $y_2$ such that
$$\vert\int_{\sigma_{12}} 
\omega_i\vert \,< \,\epsilon\, \ \ ~ \forall\ i\, .
$$
\end{enumerate} 
Since $\FF$ is in the kernel of each $\omega_i$,
$$\vert\int_{\tau_{12}} 
\omega_i\vert \, = \,0, \ \ ~ \forall\ i\, .
$$
Setting
$\epsilon$ smaller than the absolute value of any non-zero period of the $\omega_i$'s,
 it follows that $$\int_{\tau_{12}\cup\sigma_{12}} 
\omega_i\, =\,0~\ \ 1\,\leq\, i\, \leq\, g\, .$$
Hence $$\int_{\sigma_{12}} 
\omega_i\, =\,0~\ \ \forall \ \ 1\,\leq\, i\, \leq\, g\, .$$
Taking limits we obtain
that $\omega_i (\sigma^\prime (0)) \,= \,0$; but this contradicts the 
choice that $\sigma$ is transverse to $\FF$. Therefore, the leaves of $\FF$ are
proper embedded submanifolds of $V$.

Consequently, the leaf space $$D\,=\, V/\langle \FF\rangle$$
for the foliation $\FF$ is a smooth 2-manifold. Let
$$q\,:\, V\,\longrightarrow\, D$$ be the quotient map.

\begin{remark} \label{referee}
The referee kindly pointed out to us the following considerably simpler proof of the 
fact that leaves of $\FF$ are closed in $V$:

By equation \eqref{e1}, we have $\omega_i 
\,=\, f_{i,j} \omega_j$. Since $\omega_i$ are closed, $$d(f_{i,j}) \wedge \omega_j \,=\, 
0, \ \ \forall\ i\, , j\, .$$ The functions $f_{i,j}$ define a map $f\,:\, V
\,\longrightarrow\, \C P^1$. Since 
$d(f_{i,j}) \wedge \omega_j \,=\, 0,$ it follows that $f$ is constant on the leaves of 
the foliation $\FF$ defined by the forms $\omega_i$, $1\,\leq\, i\, leq\, n$.
\end{remark}

Next, let $$\xi \,:=\, TV/ \FF$$ be the quotient complex line bundle on $V$. Then
$\xi$ carries a natural flat partial connection $\mathcal D$ along the
leaves (cf. \cite{La}); this
$\mathcal D$ is known as the Bott partial connection. It is straightforward to check
that $\mathcal D$ preserves the complex structure on $\xi$. Indeed, this follows
immediately from the fact that $\omega_i$ are of type $(1\, ,0)$. Hence $\xi$ induces
an almost complex structure on $D$. Since $D$ is two--dimensional, this almost complex
structure is integrable
giving $D$ the structure of a Riemann surface. Further, there exist closed integral
$(1\, ,0)$ forms $\eta_i$, $1\,\leq\, i\,\leq\, g$, on $D$ such that $$\omega_i\,=
\,f^\ast \eta_i\, .$$

\subsection{Removing indeterminacy}
Since $Z(\omega_i)$ are mutually transverse, we can construct the
almost complex blow-ups of $M$ along $\mathcal Z$ successively along the $\mu$--fold
intersections,
$\mu\,=\,2\, , \cdots\, , g$, to obtain $\widehat M$ such that
\begin{enumerate}
\item there is an extension of the line bundle $\xi$ to all of $\widehat M$,
\item there is a well-defined smooth map
$$\widehat{q}\,:\, \widehat{M} \,\longrightarrow\, D$$ extending $q$, and
\item the blown-up locus has the structure of complex analytic
$\C{\mathbb P}^1$--bundles as usual (due to transversality).
\end{enumerate}

The Riemann surface $D$ is compact because $\widehat M$ is so.
Further, $D$ has genus greater than one as $g \,> \,1$. So
any complex analytic map from $\C{\mathbb P}^1$ to $D$ must be constant. Therefore,
$\widehat q$ actually induces
a smooth map from $M$ to $D$, and we may assume that the indeterminacy locus of $q$ was
empty to start off with,
or equivalently that $q$ extends to a smooth map $\overline{q} \,:\, M\,\longrightarrow\,
D$. This furnishes the required conclusion and completes the proof of Theorem \ref{main}.

\section{Refinements and consequences}

\subsection{Stein factorization}

We now proceed as in the proof of the classical Castelnuovo-de Franchis Theorem,
\cite[p. 24, Theorem 2.7]{abc},
to deduce {\it a posteriori} that $D$ is a Stein factorization of a map to a compact
Riemann surface. Define $$h \,:\, V\,\longrightarrow\, \C{\mathbb P}^{g-1}\, ,
~\ \ ~ m\,\longmapsto\, [\omega_1 (m)\, :\, \cdots \,:\, \omega_g (m)]\, .$$
Suppose $\omega_i(m) \,\neq\, 0$. Then there exists a small neighborhood $U(m)$ such that
$$h(x)\,=\, [f_{1,i} (x)\,:\, \cdots \,:\,f_{i-1,i} (x)\,:\, 1\, :\, f_{i+1,i} (x)
\,: \,\cdots \,:\,f_{g,i} (x)]\, , ~\ \forall\ x \,\in\, U(m)\, ,$$
where $f_{l,j}$ are defined in \eqref{e1}. Since $\omega_j \wedge \omega_i \,=\, 0$
and $d\omega_j\,=\, 0$ for all $i\, ,j$, it follows that $df_{j,i} \wedge \omega_i
\,= \,0$. Consequently, each $f_{j,i}$
is constant on the leaves of $\FF$. Hence $h$ has a (complex) one dimensional image, so
the image of $h$ is a Riemann surface $C$.

Further, $h$ induces a holomorphic map $$h_1\,:\, D\,\longrightarrow\, C\, .$$ We note 
that $D$ may be thought of as the Stein factorization of $h$, and $h$ factors as 
$h\,=\, h_1\circ q$.

\subsection{Further generalizations}

Let $M$ be a compact manifold, and let ${\mathcal S}\,\subset\, TM$ be a nonsingular
foliation such that $M$ has a flat transversely almost complex structure. This means
that we have an automorphism
$$
J\, :\, TM/{\mathcal S}\, \longrightarrow\, TM/{\mathcal S}
$$
such that
\begin{enumerate}
\item $J\circ J\,=\, -\text{Id}_{TM/{\mathcal S}}$, and

\item $J$ is flat with respect to the Bott partial connection on $TM/{\mathcal S}$.
\end{enumerate}
Let $\omega_i$, $1\,\leq\, i\, \leq\, g$, be linearly independent smooth sections of
$(TM/{\mathcal S})^*\otimes{\mathbb C}$ such that
\begin{enumerate}
\item each $\omega_i$ is flat with respect to the connection on
$(TM/{\mathcal S})^*\otimes{\mathbb C}$ induced by the Bott partial connection
on $TM/{\mathcal S}$,

\item each $\omega_i$ is of type $(1\, ,0)$, meaning the corresponding homomorphism
$TM/{\mathcal S}\, \longrightarrow\, M\times\mathbb C$ is $\mathbb C$--linear,

\item each $\omega_i$ is closed when considered as a complex $1$--form on $M$ using
the composition
$$
TM\otimes{\mathbb C}\,\longrightarrow\, (TM/{\mathcal S})\otimes{\mathbb C}
\,\stackrel{\omega_i}{\longrightarrow}\, M\times\mathbb C
$$
\item $\omega_i\wedge\omega_j\,=\, 0$ for all $i\, ,j$, and

\item $\omega_j$ are in general position.
\end{enumerate}
Theorem \ref{main} can be generalized to this set--up.

\begin{remark} 
The only real use we have made of the hypothesis in Theorem \ref{main} that 
$\omega_i$'s are in general position is to ensure that $\mathcal Z$ has complex 
codimension at least two. This is what allows us to remove indeterminacies in the 
smooth category (as opposed to the algebraic or complex analytic categories, where 
complex codimension greater than one follows naturally). Any hypothesis that 
ensures that the indeterminacy locus has complex codimension greater than one would 
suffice.
\end{remark}

\section*{Acknowledgements}

We are extremely grateful to the referee for very detailed and helpful comments and 
particularly for Remark \ref{referee} that considerably simplifies of the 
proof of the main theorem. Both authors acknowledge support of J. C. Bose Fellowship.

\end{document}